\documentclass[12pt,reqno]{article}

\usepackage[usenames]{color}
\usepackage{amssymb}
\usepackage{amsmath}
\usepackage{amsthm}
\usepackage{amsfonts}
\usepackage{amscd}
\usepackage{graphicx}

\usepackage[colorlinks=true,
linkcolor=webgreen,
filecolor=webbrown,
citecolor=webgreen]{hyperref}

\definecolor{webgreen}{rgb}{0,.5,0}
\definecolor{webbrown}{rgb}{.6,0,0}

\usepackage{color}
\usepackage{fullpage}
\usepackage{float}

\usepackage{graphics}
\usepackage{latexsym}

\setlength{\textwidth}{6.5in}
\setlength{\oddsidemargin}{.1in}
\setlength{\evensidemargin}{.1in}
\setlength{\topmargin}{-.1in}
\setlength{\textheight}{8.4in}

\begin{document}

\theoremstyle{plain}
\newtheorem{theorem}{Theorem}
\newtheorem{proposition}{Proposition}
\newtheorem{corollary}[theorem]{Corollary}
\newtheorem{lemma}{Lemma}
\newtheorem*{example}{Example}
\newtheorem{remark}{Remark}

\begin{center}
\vskip 1cm{\LARGE\bf 
Convolutions of second order sequences: \\ A direct approach \\
}
\vskip 1cm
{\large
Kunle Adegoke \\
Department of Physics and Engineering Physics, \\ Obafemi Awolowo University, Ile-Ife \\ Nigeria \\
\href{mailto:adegoke00@gmail.com}{\tt adegoke00@gmail.com}

\vskip 0.2 in

Segun Olofin Akerele \\
Department of Mathematics \\ University of Ibadan, Ibadan \\ Nigeria \\
\href{mailto:akereleolofin@gmail.com}{\tt akereleolofin@gmail.com}

\vskip 0.2 in

Robert Frontczak \\
Independent Researcher, Reutlingen \\  Germany \\
\href{mailto:robert.frontczak@web.de}{\tt robert.frontczak@web.de}}
\end{center}

\vskip .2 in

\begin{abstract}
Using a direct algebraic approach we derive convolution identities for second order sequences,
hereby distinguishing between sequences obeying the same or different recurrence relations.
We also state a general convolution for Horadam sequences. Convolutions for Chebyshev polynomials will also be stated.
\end{abstract}

\medskip

\noindent 2010 {\it Mathematics Subject Classification}: Primary 11B39; Secondary 11B37.

\noindent \emph{Keywords:} Convolution, Fibonacci sequence, Pell sequence, Jacobsthal sequence, Horadam sequence, Chebyshev polynomial. 

\medskip

\section{Motivation and introduction}

Let $(F_n)_{n\in\mathbb{Z}}$, $(L_n)_{n\in\mathbb{Z}}$, $(J_n)_{n\in\mathbb{Z}}$, $(j_n)_{n\in\mathbb{Z}}$, $(P_n)_{n\in\mathbb{Z}}$, 
$(Q_n)_{n\in\mathbb{Z}}$, $(B_n)_{n\in\mathbb{Z}}$ and $(C_n)_{n\in\mathbb{Z}}$ denote the $n$th Fibonacci, Lucas, Jacobsthal, 
Jacobsthal-Lucas, Pell, Pell-Lucas, balancing and Lucas-balancing numbers, respectively. \\

Convolutions are ubiquitous in number theory. They exist for many important number and polynomial sequences 
and are usually derived via manipulations of the respective generating functions. 
Classical examples involving Fibonacci and Lucas numbers are \cite{Koshy,kim16}
\begin{equation}
\sum_{k=0}^{n} F_{k}F_{n-k} = \frac{1}{5}((n+1)L_{n} - 2F_{n+1}),
\end{equation}
\begin{equation}
\sum_{k=0}^{n} L_{k}L_{n-k} = (n+1)L_{n} + 2F_{n+1}, 
\end{equation}
\begin{equation}
\sum_{k=0}^{n} L_{k} F_{n-k} = (n+1)F_{n}.
\end{equation}
Other examples for mixed convolutions are
\begin{equation}
\sum_{k=0}^n J_k F_{n-k} = J_{n+1} - F_{n+1} \quad (\mbox{see}\,\,\cite{Koshy18}),
\end{equation}
\begin{equation}
\sum_{k=0}^n P_k F_{n-k} = P_{n} - F_{n} \quad (\mbox{see}\,\,\cite{Greubel,Seiffert}),
\end{equation}
\begin{equation}\label{LJ_conv}
\sum_{k = 0}^n L_k J_{n - k} = j_{n + 1} - L_{n + 1} \quad (\mbox{see}\,\,\cite{griffiths15,bramham16})
\end{equation}
\begin{equation}
\sum_{k=0}^{n} F_{2k} B_{2(n-k)} = \frac{1}{31} \Big ( B_{2n} - 6F_{2n} \Big ) \quad (\mbox{see}\,\,\cite{frontczak19}).
\end{equation}
For a more general treatment of convolutions relevant for this study see the papers 
\cite{Dresden21,Dresden22,Dresden24,frontczak2022,frontczak2024,Gessel24,Merca,szakacs16,szakacs17}.  \\

This paper offers a different and more direct access to convolutions for second order sequences
through the manipulation of various algebraic identities. The approach is very intuitive 
as it does not require any knowledge about generating functions. Among the many results that will be derived, 
we will prove the following generalization of \eqref{LJ_conv} (Theorem \ref{thm.lwh48n4} below):
For all integers $r$ and $n$, we have the convolution
\begin{equation*}
\begin{split}
\gamma (r)\sum_{k = 0}^n L_{rk} J_{r(n - k)}  &= (- 1)^r (L_r - j_r )L_{rn} J_r \\
&\qquad - \left( \left( {( - 1)^r + j_{2r} } \right)J_r - J_{3r} \right)L_{r(n + 1)} \\
&\qquad + (- 1)^r 2^r J_{rn} \left( (- 1)^r 2^{r + 1} - L_r j_r + L_{2r} \right) \\
&\qquad - (- 1)^r J_{r(n + 1)} \left( \left( L_r - 2 \right ) j_r + \left( 1 - ( - 1)^r \right )L_r \right),
\end{split}
\end{equation*}
where
\[
\gamma (r) = j_{2r} \left( 1 + (- 1)^r \right) + (- 1)^r 2^r L_r (L_r - j_r ) - (- 1)^r L_r j_r. 
\]
 
As will be seen below, our approach is also applicable for proving convolutions for Chebyshev polynomials.
Such convolutions have also been studied recently by Fan and Chu \cite{Fan}. \\

The Horadam sequence $(w_n)_{n\in\mathbb{Z}}=(w_n(a,b;p,q))_{n\in\mathbb{Z}}$ is defined, for all integers, 
by the recurrence relation \cite{Horadam} 
\[
w_0=a, \ w_1=b, \ \ w_n=pw_{n-1}-qw_{n-2}, \quad n\geq 2,
\]
with 
\[
w_{-n}=\frac{1}{q}(pw_{-n+1}-w_{-n+2}),
\]
where $a,b,p$ and $q$ are arbitrary complex numbers with non-zero $p$ and $q$. The sequence $w_n$ generalizes many 
important number and polynomial sequences. Sequences 
\[
U_n = U_n(p,q) = w_n(0,1;p,q) \quad \text{and} \quad V_n = V_n(p,q) = w_n(2,p;p,q)
\]
are called the Lucas sequences of the first kind and of the second kind, respectively. The Binet formulas for the sequences 
$U_n, V_n$ and $w_n$ in the non-degenerated case, $\Delta=p^2-4q>0$, are 
\[
U_n = \frac{\alpha^n-\beta^n}{\alpha-\beta}, \quad V_n=\alpha^n + \beta^n, \quad 
w_n =\frac{b-a\beta}{\alpha-\beta}\alpha^n + \frac{a\alpha-b}{\alpha-\beta}\beta^n,
\]
where 
\[
\alpha = \alpha(p,q) = \frac{p+\sqrt{p^2-4q}}{2} \qquad \text{and} \qquad \beta = \beta(p,q) = \frac{p-\sqrt{p^2-4q}}{2}
\]
are the distinct zeros of the characteristic polynomial $x^2-px+q$.\\
For an integer $n\geq 0$, the Chebyshev polynomials $(t_n(x))_{n\geq 0}$ of the first kind are defined by 
\[
t_0(x)=1, \, t_1(x)=x, \quad t_{n+1}(x) = 2x t_n(x) - t_{n-1}(x),
\]
while the Chebyshev polynomials $(u_n(x))_{n\geq 0}$ of the second kind are defined by
\[
u_0(x)=1, \ \ u_1(x)=2x, \ \ u_{n+1}(x) = 2xu_n(x) - u_{n-1}(x).
\]
Chebyshev polynomials are special Horadam sequence members and their Binet-like formulas are given by
\begin{align*}
	t_n(x) &= \frac{1}{2} \left((x+\sqrt{x^2-1})^n + (x-\sqrt{x^2-1})^n \right), \\
	u_n(x) &= \frac{1}{2\sqrt{x^2-1}} \left((x+\sqrt{x^2-1})^{n+1} - (x-\sqrt{x^2-1})^{n+1}\right).
\end{align*}
from which we also get 
\[
t_n(x) = \frac{1}{2}(u_n(x) - u_{n-2}(x)).
\]
\vspace{1.5mm}
Our starting point is the identity stated in the next Theorem.
\begin{theorem}\label{thm.n3pubdo}
Let $A_1$, $A_2$, $B_1$, $B_2$, $x$, $y$, $z$ and $w$ be non-zero complex numbers. Then,
\begin{equation}
\begin{split}
&\sum_{k = 0}^n \left( {A_1 x^k  + B_1 y^k } \right)\left( {A_2 z^{n - k} + B_2 w^{n - k} } \right) \\ 
&\qquad = A_1 A_2 \frac{{x^{n + 1} - z^{n + 1} }}{{x - z}} + A_1 B_2 \frac{{x^{n + 1} - w^{n + 1} }}{{x - w}} 
+ A_2 B_1 \frac{{y^{n + 1} - z^{n + 1} }}{{y - z}} + B_1 B_2 \frac{{y^{n + 1} - w^{n + 1} }}{{y - w}}.
\end{split}
\end{equation}
\end{theorem}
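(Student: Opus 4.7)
The plan is to proceed by direct expansion, viewing the identity as the assembly of four elementary geometric-type convolutions. The core tool is the standard finite geometric sum in two variables,
\[
\sum_{k=0}^{n} u^{k} v^{n-k} = \frac{u^{n+1} - v^{n+1}}{u - v},
\]
valid whenever $u \neq v$ (and equal to $(n+1)u^{n}$ in the degenerate case, which one may obtain by a limiting argument if needed).

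First I would expand the product $(A_{1}x^{k}+B_{1}y^{k})(A_{2}z^{n-k}+B_{2}w^{n-k})$ term by term, obtaining four monomials of the shape $C \cdot u^{k} v^{n-k}$ with $C \in \{A_{1}A_{2}, A_{1}B_{2}, A_{2}B_{1}, B_{1}B_{2}\}$ and $(u,v) \in \{(x,z),(x,w),(y,z),(y,w)\}$. Using linearity of summation, I would then split the sum from $k=0$ to $n$ into four separate sums and pull the constant coefficients outside. At this point each of the four sums has precisely the form to which the geometric identity displayed above applies.

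Next I would apply the geometric-sum formula to each piece, yielding the four fractions on the right-hand side of the claim. Assembling the four resulting terms immediately gives the stated identity. The hypothesis that the numbers $x,y,z,w$ are non-zero is not actually used in the derivation itself; what is implicitly required is that the four differences $x-z$, $x-w$, $y-z$, $y-w$ be non-zero so that the geometric-sum formula in closed form is meaningful. In the applications that follow, $x,y$ and $z,w$ will be the roots of characteristic polynomials of second order recurrences, so these distinctness conditions are automatic in the non-degenerate cases treated in the paper.

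There is essentially no obstacle: the entire argument is a two-line bookkeeping exercise once one recognizes the convolutions after expansion. The only point that requires care is the distinctness of the pairs appearing in the denominators; this is a non-issue under the standing non-degeneracy assumptions used throughout the paper.
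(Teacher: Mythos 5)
Your proposal is correct and follows essentially the same route as the paper, which proves the identity by expanding the product into four geometric-type convolutions $\sum_{k=0}^n u^k v^{n-k}$ and applying the partial-sum formula for a geometric progression to each. Your added remark about the distinctness of the pairs in the denominators is a sensible clarification that the paper leaves implicit (and later handles via L'Hospital's rule when $x=z$, $y=w$).
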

\begin{proof}
The proof is a straightforward calculation using the partial sum of the geometric progression.
\end{proof}
Theorem \ref{thm.n3pubdo} contains a range of interesting algebraic identities as special cases.
These identities will be utilized to provide direct derivations for convolutions.

\section{Convolutions of two second order sequences}

In this section derive convolutions of two sequences obeying the same or different recurrence relations.

\begin{theorem}\label{thm.qn6huxt}
Let $A_1$, $A_2$, $B_1$, $B_2$, $z$ and $w$ be non-zero complex numbers. Then,
\begin{equation}
\begin{split}
&\sum_{k = 0}^n \left( A_1 z^k + B_1 w^k \right)\left( A_2 z^{n - k} + B_2 w^{n - k} \right) \\ 
&\qquad = (n + 1)\left( A_1 A_2 z^n + B_1 B_2 w^n \right) + \left( A_1 B_2 + A_2 B_1 \right) \frac{{z^{n + 1} - w^{n + 1} }}{{z - w}}.
\end{split}
\end{equation}
\end{theorem}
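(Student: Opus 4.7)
The plan is to establish the identity by direct expansion, reducing the double sum to four elementary sums. I would first expand the product in the summand, obtaining
\[
\sum_{k=0}^n \bigl(A_1 z^k + B_1 w^k\bigr)\bigl(A_2 z^{n-k} + B_2 w^{n-k}\bigr) = A_1 A_2 S_{zz} + A_1 B_2 S_{zw} + A_2 B_1 S_{wz} + B_1 B_2 S_{ww},
\]
where $S_{uv} = \sum_{k=0}^n u^k v^{n-k}$. The strategy is then to evaluate each of the four sums separately in closed form and recombine.

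Next, I would handle the two \emph{diagonal} sums. Since $z^k z^{n-k} = z^n$ is independent of $k$, one gets immediately $S_{zz} = (n+1)z^n$, and likewise $S_{ww} = (n+1) w^n$. These furnish the term $(n+1)(A_1 A_2 z^n + B_1 B_2 w^n)$ in the claimed identity.

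For the two \emph{off-diagonal} sums, I would invoke the partial sum of a geometric progression (exactly as in Theorem \ref{thm.n3pubdo}): writing $S_{zw} = w^n \sum_{k=0}^n (z/w)^k$ and summing gives
\[
S_{zw} = \frac{z^{n+1} - w^{n+1}}{z - w},
\]
and by symmetry in the roles of the indices $k$ and $n-k$, $S_{wz}$ equals the same expression. This collects the remaining contributions into the factor $(A_1 B_2 + A_2 B_1)\frac{z^{n+1} - w^{n+1}}{z - w}$, completing the identity.

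I expect no real obstacles: the argument is a direct specialization of Theorem \ref{thm.n3pubdo} with $x = z$ and $y = w$, the only subtlety being that two of the four quotients in that theorem become indeterminate $0/0$ forms in this limit and must be replaced by their limiting values $(n+1)z^n$ and $(n+1)w^n$. Carrying out the computation from scratch as above avoids that limiting argument entirely and gives the cleanest derivation.
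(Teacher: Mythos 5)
Your proof is correct, and it reaches the identity by a slightly different route than the paper. The paper's proof is a one-line limiting argument: set $x=z$ and $y=w$ in Theorem~\ref{thm.n3pubdo} and resolve the two resulting $0/0$ quotients by L'H\^{o}pital's rule (equivalently, $\lim_{x\to z}\frac{x^{n+1}-z^{n+1}}{x-z}=(n+1)z^n$). You instead expand the summand into the same four pieces and observe directly that the diagonal sums $\sum_{k=0}^n z^k z^{n-k}=(n+1)z^n$ are constant in $k$, which removes any need for a limit; the off-diagonal sums are handled by the geometric progression exactly as in Theorem~\ref{thm.n3pubdo}. The underlying decomposition is identical, so the two arguments buy essentially the same thing, but yours is marginally more elementary in that it is purely algebraic, whereas the paper's is shorter because it reuses Theorem~\ref{thm.n3pubdo} wholesale. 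One small point common to both your writeup and the paper's statement: the closed form $\frac{z^{n+1}-w^{n+1}}{z-w}$ implicitly requires $z\neq w$, a hypothesis the theorem does not state explicitly; it would be worth noting that the case $z=w$ needs the replacement $(n+1)z^n$ for that quotient.
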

\begin{proof}
Set $x=z$, $y=w$ in the identity stated in Theorem~\ref{thm.n3pubdo} and use the L'Hospital's rule.
\end{proof}

\begin{example}
Convolution of the Pell and Pell-Lucas numbers, $P_n$ and $Q_n$;
\begin{equation}
\sum_{k=0}^n P_{rk} Q_{r(n-k)} = (n+1) P_{rn}.
\end{equation}
\begin{proof}
In Theorem~\ref{thm.qn6huxt}, make the following choices:
\[
A_1 = \frac{1}{a-b}, \ \ z=a^r, \ \ B_1=-\frac{1}{a-b}, \ \ w=b^r, \ \ A_2=B_2=1,
\]
where $a=1+\sqrt{2}$, $b=1-\sqrt{2}$. Recall that the Pell and Pell-Lucas numbers are given by
\[
P_n = \frac{a^n - b^n}{a-b}, \qquad Q_n = a^n + b^n.
\]
\end{proof}
\end{example}
\begin{example}
Convolution of the Jacobsthal and Jacobsthal-Lucas numbers, $J_m$ and $j_m$;
\begin{equation}
\sum_{k=0}^n J_{rk} j_{r(n-k)} = (n+1) J_{rn}
\end{equation}
\begin{proof}
In Theorem~\ref{thm.qn6huxt}, make the following choices:
\[
A_1=\frac{1}{3}, \ \ z=2^r, \ \ B_1=-\frac{1}{3}, \ \ w=(-1)^r, \ \ A_2=B_2=1,
\]
where $J_m=\frac{2^m-(-1)^m}{3}$ and $j_m=2^m+(-1)^m$.	
\end{proof}
\end{example}
\begin{example}
Convolution of the balancing and Lucas-balancing numbers, $B_n$ and $C_n$;
\begin{equation}
\sum_{k=0}^n B_{rk} C_{r(n-k)} = \frac{(n+1)}{2} B_{rn}.
\end{equation}
\begin{proof}
In Theorem~\ref{thm.qn6huxt}, make the following choices:
\[
A_1 = \frac{1}{\lambda_1-\lambda_2}, \ \ z=\lambda_1^r, \ \ B_1=-\frac{1}{\lambda_1-\lambda_2}, \ \ w=\lambda_2^r, \ \ A_2=B_2=1/2,
\]
where $\lambda_1=3+\sqrt{8}$, $\lambda_2=3-\sqrt{8}$. Recall that the balancing and Lucas-balancing numbers are given by
\[
B_n = \frac{\lambda_1^n - \lambda_2^n}{\lambda_1-\lambda_2}, \qquad C_n = \frac{\lambda_1^n + \lambda_2^n}{2}.
\]
\end{proof}
\end{example}

\begin{example}
Convolution of the Chebyshev polynomials of the first and second kind, $t_n(x)$ and $u_n(x)$:
\begin{equation}
2u_{r - 1}(x) \sum_{k = 0}^n t_{rk}(x) u_{r(n - k)}(x) = (n + 1)u_{r - 1}(x) u_{rn}(x) + u_{rn + r - 1}(x).
\end{equation}
\end{example}
\begin{proof}
In Theorem~\ref{thm.qn6huxt}, make the following choices:
\[
A_1 = \frac{x}{{\lambda + \gamma }},\quad B_1 = \frac{x}{{\lambda + \gamma }},\quad A_2 = \frac{\lambda }{{\lambda - \gamma }},\quad B_2 = \frac{{- \gamma }}{{\lambda - \gamma }},\quad z = \lambda^r,\quad w = \gamma^r,
\]
where $\lambda$ and $\gamma$ are the roots of $1-2sx+s^2=0$, that is, $\lambda=x + \sqrt{x^2 -1}$ and $\gamma=x - \sqrt{x^2 -1}$. 
Recall that the Chebyshev polynomials are given by the Binet-like formulas
\[
t_n (x) = x\frac{{\lambda^n + \gamma^n }}{{\lambda + \gamma }},\quad u_n (x) = \frac{{\lambda^n - \gamma^n }}{{\lambda - \gamma}}.
\]
\end{proof}

\begin{corollary}\label{corr.y}
Let $A$, $B$, $z$ and $w$ be non-zero complex numbers. Then,
\begin{equation}
\sum_{k = 0}^n \left( Az^k + Bw^k \right)\left( Az^{n - k} - Bw^{n - k} \right) = (n + 1)\left( A^2 z^n - B^2 w^n \right).
\end{equation}
\end{corollary}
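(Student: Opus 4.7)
The plan is to obtain Corollary \ref{corr.y} as an immediate specialization of Theorem \ref{thm.qn6huxt}. Comparing the left-hand side of the corollary with that of Theorem \ref{thm.qn6huxt}, the natural parameter choice is $A_1 = A_2 = A$, $B_1 = B$ and $B_2 = -B$, which turns the generic factor $(A_1 z^k + B_1 w^k)(A_2 z^{n-k} + B_2 w^{n-k})$ into exactly $(Az^k + Bw^k)(Az^{n-k} - Bw^{n-k})$.

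With these substitutions I would compute the three coefficients appearing on the right-hand side of Theorem \ref{thm.qn6huxt}: $A_1 A_2 = A^2$, $B_1 B_2 = -B^2$, and the crucial cross coefficient $A_1 B_2 + A_2 B_1 = A(-B) + AB = 0$. The vanishing of this cross coefficient is the whole point; it kills the geometric-sum term $\frac{z^{n+1}-w^{n+1}}{z-w}$, so no symmetric Lucas-type term survives, and what remains is precisely $(n+1)(A^2 z^n - B^2 w^n)$, matching the claimed identity.

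There is essentially no obstacle here: the argument is a one-line specialization, and the only thing worth checking is the sign bookkeeping that makes $A_1 B_2 + A_2 B_1$ collapse to zero. I would therefore write the proof as a single sentence indicating the choice of parameters in Theorem \ref{thm.qn6huxt} and noting that the cross term vanishes, leaving the stated formula.
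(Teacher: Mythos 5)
Your proposal is correct and coincides with the paper's own proof, which likewise sets $A_1=A_2=A$, $B_1=B=-B_2$ in Theorem~\ref{thm.qn6huxt}; you merely make explicit the vanishing of the cross coefficient $A_1B_2+A_2B_1$, which the paper leaves to the reader. No issues.
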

\begin{proof}
In Theorem~\ref{thm.qn6huxt}, set $A_1=A_2=A$, $B_1=B=-B_2$.
\end{proof}

\begin{example}
Convolution of Fibonacci numbers and Lucas numbers
\begin{equation}
\sum_{k = 0}^n L_{rk} F_{r(n - k)} = (n + 1)F_{rn}.
\end{equation}
More generally, convolution of Lucas sequence of the first kind and of the second kind:
\begin{equation}\label{UV_conv}
\sum_{k = 0}^n U_{rk} V_{r(n - k)} = (n + 1) U_{rn}.
\end{equation}
\end{example}

\begin{remark}
Similar convolutions have been studied recently by Dresden and Wang \cite{Dresden24}.
\end{remark}

\begin{example}
\begin{equation}
\sum_{k=0}^n t_{rk}(x)u_{r(n-k)}(x) = \frac{x(n+1)}{\lambda + \gamma}u_{rn}(x)
\end{equation}
\begin{proof}
Set $A=B=x/(\lambda + \gamma)$ and $z=\lambda^r$ and $w=\gamma^r$ in Corollary~\ref{corr.y}. 
\end{proof}
\end{example}

\begin{corollary}\label{cor.ytalcuq}
Let $A$, $B$, $z$ and $w$ be non-zero complex numbers. Then,
\begin{equation}
\sum_{k = 0}^n \left( A z^k + B w^k \right) \left( A z^{n - k} + B w^{n - k} \right) 
= (n + 1)\left( A^2 z^n + B^2 w^n \right) + 2AB \frac{{z^{n + 1} - w^{n + 1} }}{{z - w}}.
\end{equation}
\end{corollary}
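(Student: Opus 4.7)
The plan is to obtain this corollary as an immediate specialization of Theorem~\ref{thm.qn6huxt}, exactly in the spirit of how Corollary~\ref{corr.y} is derived. In Theorem~\ref{thm.qn6huxt}, the left-hand side involves four parameters $A_1,A_2,B_1,B_2$ attached to the two factors in the convolution. The corollary corresponds to the symmetric situation in which the two factors are identical, i.e.\ the same linear combination of $z^k$ and $w^k$ appears on both sides of the product.

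Concretely, I would set $A_1 = A_2 = A$ and $B_1 = B_2 = B$ in Theorem~\ref{thm.qn6huxt}. Under this substitution the coefficients on the right-hand side collapse as follows: $A_1 A_2 = A^2$, $B_1 B_2 = B^2$, and $A_1 B_2 + A_2 B_1 = AB + AB = 2AB$. Plugging these into the conclusion of Theorem~\ref{thm.qn6huxt} yields exactly
\[
(n+1)\bigl(A^2 z^n + B^2 w^n\bigr) + 2AB\,\frac{z^{n+1}-w^{n+1}}{z-w},
\]
which is the right-hand side claimed in Corollary~\ref{cor.ytalcuq}.

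There is essentially no obstacle here: the proof is a single substitution, and the only minor thing to note is that all hypotheses of Theorem~\ref{thm.qn6huxt} (non-vanishing of the scalars and of $z,w$) are preserved under the substitution. The case $z=w$ that would make the quotient $(z^{n+1}-w^{n+1})/(z-w)$ indeterminate is handled implicitly by the convention that this quotient denotes $\sum_{j=0}^{n} z^j w^{n-j}$, or equivalently by the same L'Hospital limit argument used in the proof of Theorem~\ref{thm.qn6huxt}. Hence the corollary follows at once.
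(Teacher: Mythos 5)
Your proof is correct and is exactly the paper's own argument: the authors also obtain Corollary~\ref{cor.ytalcuq} by setting $A_1=A_2=A$ and $B_1=B_2=B$ in Theorem~\ref{thm.qn6huxt}. Your additional remark about the $z=w$ case is a harmless extra observation not present in the paper.
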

\begin{proof}
In Theorem~\ref{thm.qn6huxt}, set $A_1=A_2=A$, $B_1=B=B_2$.
\end{proof}

\begin{example}
Self convolution of Fibonacci numbers and Lucas numbers:
\begin{equation}
5F_r \sum_{k = 0}^n F_{rk} F_{r(n - k)} = (n + 1)F_r L_{rn} - 2F_{r(n + 1)},
\end{equation}
\begin{equation}
F_r \sum_{k = 0}^n L_{rk} L_{r(n - k)} = (n + 1)F_r L_{rn} + 2F_{r(n + 1)}. 
\end{equation}
More generally, self convolution of Lucas sequences:
\begin{equation}\label{UU_conf}
U_r \Delta \sum_{k = 0}^n U_{rk} U_{r(n - k)} = (n + 1)U_r V_{rn} - 2U_{r(n + 1)},
\end{equation}
\begin{equation}\label{VV_conf}
U_r \sum_{k = 0}^n V_{rk} V_{r(n - k)} = (n + 1)U_r V_{rn} + 2U_{r(n + 1)}. 
\end{equation}
\end{example}

\begin{example}
Another example is the self convolution of the Chebyshev polynomials, namely,
\begin{equation}\label{eq.qa5lyvu}
2u_{r - 1}(x) \sum_{k = 0}^n t_{rk}(x) t_{r(n - k)}(x) = (n + 1)u_{r - 1}(x) t_{rn}(x) + u_{rn + r - 1}(x)
\end{equation}
and
\begin{equation}\label{eq.h4s17al}
2(x^2 - 1)u_{r - 1}(x) \sum_{k = 0}^n u_{rk}(x) u_{r(n - k)}(x) = (n + 1)u_{r - 1}(x) t_{rn + 2}(x) - u_{rn + r - 1}(x).
\end{equation}
Identity~\eqref{eq.qa5lyvu} was obtained by setting $A=B=x/(\lambda + \gamma)$ and $z=\lambda^r$ and $w=\gamma^r$ 
in Corollary~\ref{cor.ytalcuq}; while~\eqref{eq.h4s17al} was produced with the choice $A=\lambda/(\lambda - \gamma)$, 
$B=-\gamma/(\lambda - \gamma)$ and $z=\lambda^r$ and $w=\gamma^r$.
\end{example}

\begin{corollary}
Let $A$, $B$, $z$ and $w$ be non-zero complex numbers. Then,
\begin{equation}
\sum_{k = 0}^n \left( Az^{2k} + Bw^{2k} \right)\left( Az^{2n - 2k} - Bw^{2n - 2k} \right) 
= (n + 1)\left( Az^n + Bw^n \right)\left( Az^n - Bw^n \right).
\end{equation}
\end{corollary}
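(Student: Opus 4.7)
The plan is to recognize that this corollary is essentially a repackaging of Corollary~\ref{corr.y} combined with a difference-of-squares factorization, so the proof should be very short.

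First I would apply Corollary~\ref{corr.y} with the substitutions $z \mapsto z^2$ and $w \mapsto w^2$ (keeping $A$ and $B$ unchanged). Since the exponents $k$ and $n-k$ in the Corollary~\ref{corr.y} identity then become $2k$ and $2(n-k)$, the left-hand side of the present statement appears verbatim, and the substitution produces
\[
\sum_{k = 0}^n \left( Az^{2k} + Bw^{2k} \right)\left( Az^{2n - 2k} - Bw^{2n - 2k} \right) = (n + 1)\left( A^2 z^{2n} - B^2 w^{2n} \right).
\]

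The remaining step is purely algebraic: the right-hand side factors as a difference of squares,
\[
A^2 z^{2n} - B^2 w^{2n} = (Az^n + Bw^n)(Az^n - Bw^n),
\]
which matches the claimed right-hand side after multiplying through by $(n+1)$. There is no real obstacle; if anything, the only subtlety to note is that the application of Corollary~\ref{corr.y} is legitimate because the hypothesis only requires $z^2$ and $w^2$ to be nonzero, which follows from $z$ and $w$ being nonzero. Alternatively, one could give a one-line direct proof by expanding the product inside the sum, observing that the cross terms $ABz^{2k}w^{2n-2k}$ and $ABz^{2n-2k}w^{2k}$ telescope to zero after the index substitution $k \mapsto n - k$, and reading off the constant contribution $(n+1)(A^2 z^{2n} - B^2 w^{2n})$; but routing through Corollary~\ref{corr.y} fits more naturally into the style of the section.
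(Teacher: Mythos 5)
Your proof is correct and is essentially the paper's own argument: the paper substitutes $z^2$ for $z$ and $w^2$ for $w$ directly in Theorem~\ref{thm.qn6huxt} with $A_1=A_2=A$, $B_1=B=-B_2$, which is exactly what routing through Corollary~\ref{corr.y} accomplishes, followed by the same difference-of-squares factorization of $A^2z^{2n}-B^2w^{2n}$.
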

\begin{proof}
In Theorem~\ref{thm.qn6huxt}, set $A_1=A_2=A$, $B_1=B=-B_2$ and write $z^2$ for $z$ and $w^2$ for $w$.
\end{proof}

\begin{example}
For the Lucas sequences of the first and of the second kinds we have the convolution:
\begin{equation}
\sum_{k=0}^n U_{2r(n-k)} V_{2rk} = (n+1)U_{rn} V_{rn}.
\end{equation}
\end{example}

\begin{example}
For the Chebyshev polynomials, we have
\begin{equation}
\sum_{k=0}^n t_{2rk}(x) u_{2r(n-k)}(x) = (n+1) t_{rn}(x) u_{rn}(x).
\end{equation}
\end{example}

\begin{corollary}
Let $A$, $B$, $z$ and $w$ be non-zero complex numbers. Then,
\begin{align}
&\sum_{k = 0}^n \left( {Az^{2k} + Bw^{2k} } \right)\left( {Az^{2n - 2k} + Bw^{2n - 2k} } \right) \nonumber \\ 
&\qquad = (n + 1)\left( {Az^n + Bw^n } \right)^2 - 2(n + 1)AB(zw)^n + 2AB\frac{{z^{2n + 2} - w^{2n + 2} }}{{z^2 - w^2 }}.
\end{align}
\end{corollary}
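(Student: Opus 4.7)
The plan is to reduce the claim to Corollary~\ref{cor.ytalcuq} via the substitutions $z \mapsto z^2$ and $w \mapsto w^2$, followed by a completion-of-square step on the right-hand side. Replacing $z$ and $w$ by $z^2$ and $w^2$ in Corollary~\ref{cor.ytalcuq} directly gives
\begin{equation*}
\sum_{k=0}^n \left(Az^{2k}+Bw^{2k}\right)\left(Az^{2n-2k}+Bw^{2n-2k}\right) = (n+1)\left(A^2 z^{2n}+B^2 w^{2n}\right) + 2AB\,\frac{z^{2n+2}-w^{2n+2}}{z^2-w^2},
\end{equation*}
so the last summand already matches the third term of the claimed formula.

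It then suffices to rewrite the leading summand $A^2 z^{2n}+B^2 w^{2n}$ in the form prescribed by the statement. For that I would invoke the elementary identity $(Az^n+Bw^n)^2 = A^2 z^{2n} + 2AB(zw)^n + B^2 w^{2n}$, which immediately yields $A^2 z^{2n}+B^2 w^{2n} = (Az^n+Bw^n)^2 - 2AB(zw)^n$. Multiplying by $n+1$ and substituting back produces exactly the right-hand side of the corollary.

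There is essentially no obstacle here: once Corollary~\ref{cor.ytalcuq} is applied, the proof reduces to a one-line algebraic rearrangement (completing the square in $A^2 z^{2n} + B^2 w^{2n}$). The only small bookkeeping point is that the ratio $(z^{2n+2}-w^{2n+2})/(z^2-w^2)$ should be interpreted as the polynomial $\sum_{j=0}^n z^{2n-2j}w^{2j}$ whenever $z^2=w^2$, so that the identity remains meaningful in the degenerate case. Apart from this cosmetic remark, no further work is needed.
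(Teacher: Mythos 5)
Your proof is correct and follows essentially the same route as the paper, which applies Theorem~\ref{thm.qn6huxt} with $A_1=A_2=A$, $B_1=B_2=B$ and $z^2$, $w^2$ in place of $z$, $w$ --- precisely the specialization you reach by substituting into Corollary~\ref{cor.ytalcuq}. The completing-the-square step $A^2z^{2n}+B^2w^{2n}=(Az^n+Bw^n)^2-2AB(zw)^n$ is the same implicit rearrangement the paper uses to present the right-hand side.
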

\begin{proof}
In Theorem~\ref{thm.qn6huxt}, set $A_1=A_2=A$, $B_1=B_2=B$ and write $z^2$ for $z$ and $w^2$ for $w$.
\end{proof}

\begin{example}
For the Lucas sequences of the second kind we have the convolution:
\begin{equation}
\sum_{k=0}^n V_{2rk} V_{2r(n-k)} = (n+1) \left ( V_{rn}^2 - 2 q^{rn} \right ) + 2 \frac{U_{2r(n+1)}}{U_{2r}}.
\end{equation}
\end{example}

\begin{example}
For the Chebyshev polynomials, we have
\begin{equation}
\sum_{k=0}^n t_{2rk}(x) t_{2r(n-k)}(x) = (n+1) \left (t_{rn}^2(x) - \frac{1}{2}\right ) + \frac{1}{2}\frac{u_{2r(n+1)-1}(x)}{u_{2r-1}(x)}.
\end{equation}
\end{example}

\section{Convolution of two Horadam sequences}

\begin{theorem}\label{thm.rtf6ne1}
For $j\in\mathbb Z$, let $X_j(p_X,q_X)$ and $Y_j(p_Y,q_Y)$ be two Horadam sequences with respective associated 
Lucas sequences of the second kind ${V_X}_j(p_X,q_X)$ and ${V_Y}_j(p_Y,q_Y)$. Then,
\begin{equation}
\begin{split}
\gamma(r) \sum_{k = 0}^n X_{rk} Y_{r(n - k)} &= q_X^r X_{rn} \left( (q_X^r - {V_X}_r {V_Y}_r + {V_Y}_{2r} )Y_0 + {V_X}_r Y_r - Y_{2r} \right)\\
&\quad - X_{r(n + 1)} \left( {(q_X^r - {V_X}_r  {V_Y}_r + {V_Y}_{2r}  )Y_r + {V_X}_r Y_{2r} - Y_{3r} } \right)\\
&\quad\, + q_Y^r Y_{rn} \left( {(q_Y^r - {V_Y}_r {V_X}_r + {V_X}_{2r} )X_0 + {V_Y}_r X_r - X_{2r} } \right)\\
&\quad\; - Y_{r(n + 1)} \left( {(q_Y^r - {V_Y}_r {V_X}_r + {V_X}_{2r} )X_r + {V_Y}_r X_{2r} - X_{3r} } \right);
\end{split}
\end{equation}
where
\begin{equation}
\gamma (r) = q_X^{2r} + q_Y^{2r} - (q_X^r + q_Y^r ){V_X}_r {V_Y}_r + q_X^r {V_Y}_{2r} + q_Y^r {V_X}_r^2. 
\end{equation}
\end{theorem}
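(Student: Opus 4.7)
My plan is to specialize the master identity in Theorem~\ref{thm.n3pubdo} to the Binet formulas of the two Horadam sequences and then clear denominators. Write $X_j = A\alpha^j + B\beta^j$ and $Y_j = C\gamma^j + D\delta^j$, where $\alpha, \beta$ are the zeros of $t^2 - p_X t + q_X$ (so $\alpha\beta = q_X$ and ${V_X}_r = \alpha^r + \beta^r$) and $\gamma, \delta$ are the analogous zeros on the $Y$ side. Applying Theorem~\ref{thm.n3pubdo} with $x = \alpha^r$, $y = \beta^r$, $z = \gamma^r$, $w = \delta^r$ and $A_1 = A$, $B_1 = B$, $A_2 = C$, $B_2 = D$ represents $\sum_{k=0}^n X_{rk} Y_{r(n-k)}$ as a sum of four finite geometric quotients with denominators $\alpha^r - \gamma^r$, $\alpha^r - \delta^r$, $\beta^r - \gamma^r$, $\beta^r - \delta^r$.

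I would then combine these over the common denominator $(\alpha^r - \gamma^r)(\alpha^r - \delta^r)(\beta^r - \gamma^r)(\beta^r - \delta^r)$. Pairing the factors as $[(\alpha^r - \gamma^r)(\beta^r - \delta^r)][(\alpha^r - \delta^r)(\beta^r - \gamma^r)]$ and expanding in the elementary symmetric functions of $\{\alpha^r, \beta^r\}$ and $\{\gamma^r, \delta^r\}$ produces $(q_X^r + q_Y^r)^2 - (q_X^r + q_Y^r){V_X}_r {V_Y}_r + q_X^r {V_Y}_{2r} + q_Y^r {V_X}_{2r}$; substituting ${V_X}_{2r} = {V_X}_r^2 - 2q_X^r$ collapses this to exactly $\gamma(r)$, which is the first nontrivial verification.

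For the numerator I would compare both sides as polynomials in the independent monomials $\alpha^{rn}, \beta^{rn}, \gamma^{rn}, \delta^{rn}$, reading off the coefficient of $\alpha^{rn}$ and leaving the other three to symmetry. Denote by $M_1$ and $M_2$ the two bracketed factors in the first two lines of the statement. The coefficient of $\alpha^{rn}$ on the right-hand side is then $A(M_1 - \alpha^r M_2)$, while on the left-hand side it is $A\alpha^r(\beta^r - \gamma^r)(\beta^r - \delta^r)[C(\alpha^r - \delta^r) + D(\alpha^r - \gamma^r)]$. The key algebraic observation is that both $M_1$ and $M_2$ factor cleanly: a short computation (using $Y_0 = C+D$, $Y_r = C\gamma^r + D\delta^r$, etc., and $q_X^r = \alpha^r\beta^r$) gives $M_1 = q_X^r[C(\alpha^r - \delta^r)(\beta^r - \delta^r) + D(\alpha^r - \gamma^r)(\beta^r - \gamma^r)]$ and $M_2 = C\gamma^r(\alpha^r - \delta^r)(\beta^r - \delta^r) + D\delta^r(\alpha^r - \gamma^r)(\beta^r - \gamma^r)$. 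With these in hand, the equality $M_1 - \alpha^r M_2 = \alpha^r(\beta^r - \gamma^r)(\beta^r - \delta^r)[C(\alpha^r - \delta^r) + D(\alpha^r - \gamma^r)]$ is immediate from $q_X^r - \alpha^r\gamma^r = \alpha^r(\beta^r - \gamma^r)$ and $q_X^r - \alpha^r\delta^r = \alpha^r(\beta^r - \delta^r)$.

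The main obstacle is spotting the factorizations of $M_1, M_2$ (and by $X \leftrightarrow Y$ symmetry of $M_3, M_4$) in the first place; without them, matching the numerator to the compact form of the statement is a substantial symmetric-function exercise, whereas with them each coefficient comparison collapses to essentially one line. The remaining three coefficient comparisons then follow from the $(\alpha, A) \leftrightarrow (\beta, B)$ symmetry and the global $X \leftrightarrow Y$ exchange built into the statement.
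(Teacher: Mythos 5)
Your proposal is correct and follows essentially the same route as the paper: the paper's proof is precisely ``substitute the Binet forms $X_k=A_1\alpha_1^k+B_1\beta_1^k$, $Y_k=A_2\alpha_2^k+B_2\beta_2^k$ with $x=\alpha_1^r$, $y=\beta_1^r$, $z=\alpha_2^r$, $w=\beta_2^r$ into Theorem~\ref{thm.n3pubdo}'', and you simply carry out the algebra the authors leave implicit, with the denominator computation and the factorizations of the bracketed coefficients all checking out. The only blemish is notational: for your displayed identities to be consistent, $M_1$ must denote the \emph{entire} coefficient of $X_{rn}$ (including the prefactor $q_X^r$), not just the bracketed factor as you announce when introducing it.
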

\begin{proof}
Let $x=\alpha_1^r$, $y=\beta_1^r$, $z=\alpha_2^r$, $w=\beta_2^r$. Consider Horadam sequences $(X_k)$ and $(Y_k)$, $k\in\mathbb Z$, where
\[
X_k = A_1 \alpha _1^k + B_1 \beta _1^k ,\quad Y_k = A_2 \alpha _2^k + B_2 \beta _2^k, 
\]
with
\[
{V_X} _k = \alpha _1^k + \beta _1^k ,\quad {V_Y}_k = \alpha _2^k + \beta _2^k.
\]
Use these in Theorem~\ref{thm.n3pubdo}.
\end{proof}

\begin{theorem}\label{thm.lwh48n4}
For all integers $r$ and $n$,
\begin{equation}
\begin{split}
\gamma (r)\sum_{k = 0}^n {L_{rk} J_{r(n - k)} }  &= ( - 1)^r (L_r - j_r )L_{rn} J_r\\
&\qquad - \left( {\left( {( - 1)^r + j_{2r} } \right)J_r - J_{3r} } \right)L_{r(n + 1)}\\
 &\qquad+\, ( - 1)^r 2^r J_{rn} \left( {( - 1)^r 2^{r + 1} - L_r j_r + L_{2r} } \right)\\
&\qquad\; - ( - 1)^r J_{r(n + 1)} \left( {\left( {L_r - 2} \right)j_r + \left( {1 - ( - 1)^r } \right)L_r } \right),
\end{split}
\end{equation}
where
\[
\gamma (r) = j_{2r} \left( {1 + ( - 1)^r } \right) + ( - 1)^r 2^r L_r (L_r  - j_r ) - ( - 1)^r L_r j_r. 
\]
\end{theorem}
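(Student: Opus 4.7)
The plan is to apply Theorem~\ref{thm.rtf6ne1} directly with $X_k = L_k$ and $Y_k = J_k$, then simplify with a handful of standard identities. Since $L_k = L_{k-1} + L_{k-2}$, the Horadam parameters are $p_X = 1$, $q_X = -1$, and the associated Lucas sequence of the second kind is $L$ itself, so ${V_X}_r = L_r$. Likewise $J_k = J_{k-1} + 2 J_{k-2}$ gives $p_Y = 1$, $q_Y = -2$, and ${V_Y}_r = j_r$. Substituting $q_X^r = (-1)^r$, $q_Y^r = (-1)^r 2^r$, $L_0 = 2$, and $J_0 = 0$ into the template makes every one of the four bracketed factors on the right-hand side explicit.

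The simplifications I would invoke form a short list of elementary Binet-derived identities: $j_{2r} = 4^r + 1$, $J_{2r} = J_r j_r$, $L_{2r} = L_r^2 - 2(-1)^r$, and $L_{3r} = L_r L_{2r} - (-1)^r L_r$. The first term collapses because $J_0 = 0$, and then $J_{2r} = J_r j_r$ extracts the factor $J_r(L_r - j_r)$. The second term uses the same Jacobsthal identity so that the $L_r j_r J_r$ contribution merges with $L_r J_{2r}$, leaving $-L_{r(n+1)}\bigl(((-1)^r + j_{2r})J_r - J_{3r}\bigr)$. The third term is straightforward, requiring only $X_0 = 2$. The factor $\gamma(r)$ reshapes into the advertised form after observing that $1 + 4^r = j_{2r}$ and regrouping the $(-1)^r 2^r$ pieces into $(-1)^r 2^r L_r(L_r - j_r)$.

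I expect the fourth term to be the main obstacle. Directly from Theorem~\ref{thm.rtf6ne1} it reads
\[
-J_{r(n+1)}\bigl[\bigl((-1)^r 2^r - j_r L_r + L_{2r}\bigr) L_r + j_r L_{2r} - L_{3r}\bigr].
\]
Substituting $L_{3r} = L_r L_{2r} - (-1)^r L_r$ kills the $L_r L_{2r}$ cross-term, and $L_{2r} - L_r^2 = -2(-1)^r$ then converts $j_r(L_{2r} - L_r^2)$ into $-2(-1)^r j_r$. What remains factors as $(-1)^r\bigl(L_r(2^r + 1) - 2 j_r\bigr)$. A final rearrangement using $L_r \cdot 2^r = L_r j_r - (-1)^r L_r$, an immediate consequence of $j_r = 2^r + (-1)^r$, reshapes this into $(-1)^r\bigl((L_r - 2) j_r + (1 - (-1)^r) L_r\bigr)$, matching the stated form. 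This last reshape is the only step where the parity of $r$ is brought explicitly to the surface, so it is where the bookkeeping needs most care.
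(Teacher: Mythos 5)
Your proposal is correct and follows exactly the paper's route: the paper's proof is the one-line specialization of Theorem~\ref{thm.rtf6ne1} with $X_m=L_m$, $Y_m=J_m$, ${V_X}_m=L_m$, ${V_Y}_m=j_m$, $q_X=-1$, $q_Y=-2$, and you supply the same substitution together with the (correct) algebraic simplifications via $J_{2r}=J_rj_r$, $L_{2r}=L_r^2-2(-1)^r$, $L_{3r}=L_rL_{2r}-(-1)^rL_r$ and $j_r=2^r+(-1)^r$ that the paper leaves implicit.
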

\begin{proof}
In Theorem~\ref{thm.rtf6ne1} set $X_m=L_m$, $Y_m=J_m$, ${V_X}_m=L_m$, ${V_Y}_m=j_m$, $q_X=-1$ and $q_Y=-2$.
\end{proof}

\begin{example}
For integer $n$,
\begin{equation}
\sum_{k = 0}^n L_k J_{n - k} = j_{n + 1} - L_{n + 1}, 
\end{equation}
\begin{equation}
\sum_{k = 0}^n L_{2k} J_{2n - 2k} = J_{2n + 2} - F_{2n + 2}, 
\end{equation}
and
\begin{equation}
\sum_{k = 0}^n L_{3k} J_{3n - 3k} = \frac{1}{{62}}(11 J_{3n + 3} + 104 J_{3n}) - \frac{3}{{124}}(7L_{3n + 3} - 3L_{3n} ).
\end{equation}
\end{example}

\begin{theorem}
For all integers $r$ and $n$,
\begin{equation}
\begin{split}
\gamma(r)\sum_{k = 0}^n F_{rk}P_{r(n-k)} &= (-1)^rF_{rn}(L_rP_r-P_{2r}) \\
	                                        & -F_{r(n+1)}(((-1)^r-L_rQ_r+Q_{2r})P_r+L_rP_{2r}-P_{3r}) \\
	                                        & +(-1)^rP_{rn}(Q_rF_r-F_{2r})\\
	                                        & -P_{r(n+1)}(((-1)^r-Q_rL_r+L_{2r})F_r+Q_rF_{2r}-F_{3r}),
\end{split}
\end{equation}
\end{theorem}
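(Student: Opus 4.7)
The plan is to recognize that this convolution identity is another direct specialization of Theorem~\ref{thm.rtf6ne1}, analogous to the derivation of Theorem~\ref{thm.lwh48n4}. The only nontrivial work is bookkeeping: correctly identifying the Horadam parameters of the Fibonacci and Pell sequences and then simplifying using the known initial values.

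First I would record that $F_m = U_m(1,-1)$ is a Lucas sequence of the first kind whose companion (Lucas sequence of the second kind) is $L_m = V_m(1,-1)$, so in the notation of Theorem~\ref{thm.rtf6ne1} we take $X_m = F_m$, ${V_X}_m = L_m$, and $q_X = -1$. Analogously, $P_m = U_m(2,-1)$ is a Lucas sequence of the first kind whose companion is $Q_m = V_m(2,-1)$, giving $Y_m = P_m$, ${V_Y}_m = Q_m$, and $q_Y = -1$. Substituting these choices into Theorem~\ref{thm.rtf6ne1} immediately yields an expression of the stated shape.

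Next I would exploit the two simplifying facts $F_0 = 0$ and $P_0 = 0$, which cause the two terms proportional to $Y_0$ and $X_0$ in Theorem~\ref{thm.rtf6ne1} to vanish. What remains matches the four-term right-hand side of the proposed identity verbatim once one notes that $q_X^r = q_Y^r = (-1)^r$. For the normalizing factor, I would specialize
\[
\gamma(r) = q_X^{2r} + q_Y^{2r} - (q_X^r + q_Y^r){V_X}_r {V_Y}_r + q_X^r {V_Y}_{2r} + q_Y^r {V_X}_r^2
\]
to obtain
\[
\gamma(r) = 2 + (-1)^r \bigl( L_r^2 + Q_{2r} - 2 L_r Q_r \bigr),
\]
which is the coefficient $\gamma(r)$ implicit in the theorem statement.

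There is no genuine obstacle here: the computation is a mechanical substitution, and the structural reason it works is precisely that Theorem~\ref{thm.rtf6ne1} was designed to handle the convolution of two distinct Horadam sequences whose characteristic roots differ. The only potential pitfall is a sign or indexing slip caused by the fact that $F_m$ and $P_m$ share the value $q = -1$ but have different discriminants; keeping the subscripts $X$ and $Y$ separate throughout the simplification avoids this.
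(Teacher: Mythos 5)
Your proposal is correct and follows exactly the paper's own route: the paper likewise proves this by setting $X_m=F_m$, $Y_m=P_m$, ${V_X}_m=L_m$, ${V_Y}_m=Q_m$, $q_X=q_Y=-1$ in Theorem~\ref{thm.rtf6ne1} and using $F_0=P_0=0$ to drop the two terms. Your simplified $\gamma(r)=2+(-1)^r\bigl(L_r^2+Q_{2r}-2L_rQ_r\bigr)$ is the correct specialization (the paper's displayed $(-2)^rL_r^2$ term appears to be a typographical slip for $(-1)^rL_r^2$).
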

where 
\[
\gamma(r) = 2 - ((-1)^r + (-1)^r)L_r Q_r + (-1)^r Q_{2r} + (-2)^r L_r^2.
\]
\begin{proof}
In Theorem~\ref{thm.rtf6ne1} set $X_m=F_m$, $Y_m=P_m$, ${V_X}_m=L_m$, ${V_Y}_m=Q_m$, $q_X=-1$ and $q_Y=-1$.
\end{proof}
\begin{example}
	For integer n, 
	\begin{align}
		\sum_{k=0}^n F_k P_{n-k} &= P_n - F_n, \\
		12\sum_{k=0}^n F_{2k} P_{2n-2k} &= P_{2n} - 2F_{2n}, \\
		106\sum_{k=0}^n F_{3k} P_{3n-3k} &= 10P_{3n} - 25F_{3n}.
	\end{align}
\end{example}

\begin{remark}
Seiffert \cite{Seiffert} has shown the similar convolution
\begin{equation*}
\sum_{k=0}^n F_{r(k+1)} P_{r(n+1-k)} = \frac{F_{r}P_{r(n+2)} - P_{r}F_{r(n+2)}}{2Q_r - L_r}.
\end{equation*}
\end{remark}

\subsection{Convolution of two Lucas sequences of the first kind}

\begin{theorem}\label{cor.4tzg84a}
For $j\in\mathbb Z$, let ${U_X}_j(p_X,q_X)$ and ${U_Y}_j(p_Y,q_Y)$ be two Lucas sequences of the first kind with respective 
associated Lucas sequences of the second kind ${V_X}_j(p_X,q_X)$ and ${V_Y}_j(p_Y,q_Y)$. Then,
\begin{equation}
\begin{split}
&\gamma(r) \sum_{k = 0}^n {U_X}_{rk} {U_Y}_{r(n - k)} \\
&\qquad = q_X^r {U_X}_{rn} \left( {V_X}_r  {U_Y}_r  - {U_Y}_{2r} \right) - {U_X}_{r(n + 1)} 
\left( (q_X^r - {V_X}_r {V_Y}_r + {V_Y}_{2r}){U_Y}_r + {V_X}_r {U_Y}_{2r} - {U_Y}_{3r} \right)\\
&\qquad\, + q_Y^r {U_Y}_{rn} \left( {V_Y}_r {U_X}_r - {U_X}_{2r} \right) - {U_Y}_{r(n + 1)} \left( (q_Y^r - {V_Y}_r {V_X}_r 
+ {V_X}_{2r}){U_X}_r  + {V_Y}_r {U_X}_{2r} - {U_X}_{3r} \right);
\end{split}
\end{equation}
where
\[
\gamma (r) = q_X^{2r} + q_Y^{2r} - (q_X^r + q_Y^r ){V_X}_r {V_Y}_r + q_X^r {V_Y}_{2r} + q_Y^r {V_X}_r^2. 
\]
\end{theorem}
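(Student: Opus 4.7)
The plan is to obtain Theorem \ref{cor.4tzg84a} as a direct specialization of the general Horadam convolution in Theorem \ref{thm.rtf6ne1}. Since a Lucas sequence of the first kind is the Horadam sequence with initial values $(a,b)=(0,1)$, we simply invoke Theorem \ref{thm.rtf6ne1} with $X_j = {U_X}_j(p_X,q_X)$ and $Y_j = {U_Y}_j(p_Y,q_Y)$, whose associated Lucas sequences of the second kind are ${V_X}_j$ and ${V_Y}_j$, exactly as required.

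The key observation is that ${U_X}_0 = 0$ and ${U_Y}_0 = 0$. Consequently, in the right-hand side of Theorem \ref{thm.rtf6ne1} the two scalar factors that multiply $Y_0$ and $X_0$ collapse, so the first summand reduces from
\[
q_X^r X_{rn}\!\left((q_X^r - {V_X}_r {V_Y}_r + {V_Y}_{2r})Y_0 + {V_X}_r Y_r - Y_{2r}\right)
\]
to $q_X^r {U_X}_{rn}\!\left({V_X}_r {U_Y}_r - {U_Y}_{2r}\right)$, and analogously for the third summand, which becomes $q_Y^r {U_Y}_{rn}\!\left({V_Y}_r {U_X}_r - {U_X}_{2r}\right)$. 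The second and fourth summands, together with the coefficient $\gamma(r)$, carry over verbatim because they depend only on $X_r, X_{2r}, X_{3r}, X_{r(n+1)}$ and the analogous $Y$ quantities, none of which vanish a priori.

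Assembling these four terms yields precisely the stated identity, with the unchanged
\[
\gamma(r) = q_X^{2r} + q_Y^{2r} - (q_X^r + q_Y^r){V_X}_r {V_Y}_r + q_X^r {V_Y}_{2r} + q_Y^r {V_X}_r^2.
\]
There is no genuine obstacle here: the whole proof is a one-line substitution into Theorem \ref{thm.rtf6ne1}, and the only thing to verify is that the $Y_0$ and $X_0$ contributions indeed drop out, which is immediate from the definition of a Lucas sequence of the first kind.
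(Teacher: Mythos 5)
Your proof is correct and is exactly the route the paper intends: the theorem (labelled as a corollary of Theorem~\ref{thm.rtf6ne1}, whose proof the paper omits) follows by substituting $X_j={U_X}_j$, $Y_j={U_Y}_j$ into Theorem~\ref{thm.rtf6ne1} and using ${U_X}_0={U_Y}_0=0$ to drop the two terms containing $X_0$ and $Y_0$. Nothing is missing.
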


\begin{theorem}
For integers $r$ and $n$,
\begin{equation}
\begin{split}
	&\gamma(r) \sum_{k = 0}^n P_{rk} J_{r(n - k)} \\
	&\qquad = (-1)^r P_{rn}(Q_r J_r-J_{2r}) - P_{r(n+1)}(((-1)^r - Q_r j_r + j_{2r})J_{2r} + Q_r J_{2r} - J_{3r}) \\
	&\qquad\, +(-2)^r J_{rn}(j_r P_r - P_{2r}) - J_{r(n+1)}(((-2)^r - j_r Q_r + Q_{2r})P_r + j_r P_{2r} - P_{3r});
\end{split}
\end{equation}
where 
\[
\gamma(r) = 1 + 4^r -((-1)^r+(-2)^r) Q_r j_r + (-1)^r j_{2r} + (-2)^r Q_r^2.
\]
\end{theorem}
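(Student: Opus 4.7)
The plan is to apply Theorem~\ref{cor.4tzg84a} directly, since both $(P_n)$ and $(J_n)$ are Lucas sequences of the first kind. The Pell sequence $P_n = 2P_{n-1} + P_{n-2}$ has parameters $p_X = 2$, $q_X = -1$ with associated Lucas sequence of the second kind the Pell--Lucas numbers $Q_n$, while the Jacobsthal sequence $J_n = J_{n-1} + 2J_{n-2}$ has parameters $p_Y = 1$, $q_Y = -2$ with associated sequence of the second kind the Jacobsthal--Lucas numbers $j_n$. These are precisely the ingredients required to specialize the general two-Lucas-sequence convolution.

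First I would set ${U_X}_m = P_m$, ${V_X}_m = Q_m$, $q_X = -1$, and ${U_Y}_m = J_m$, ${V_Y}_m = j_m$, $q_Y = -2$ in Theorem~\ref{cor.4tzg84a}. The scalar prefactors then simplify as $q_X^r = (-1)^r$, $q_Y^r = (-2)^r$, $q_X^{2r} = 1$, and $q_Y^{2r} = 4^r$, so that
\[
\gamma(r) = q_X^{2r} + q_Y^{2r} - (q_X^r + q_Y^r){V_X}_r{V_Y}_r + q_X^r {V_Y}_{2r} + q_Y^r {V_X}_r^2
\]
collapses to the claimed expression $1 + 4^r - ((-1)^r + (-2)^r)Q_r j_r + (-1)^r j_{2r} + (-2)^r Q_r^2$. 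The four bracketed contributions on the right-hand side of Theorem~\ref{cor.4tzg84a} transcribe directly under the same substitution, yielding the four summands in the statement.

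There is no real obstacle here beyond careful bookkeeping of signs and the correct pairing of each Lucas sequence of the first kind with its Lucas sequence of the second kind (in particular, keeping $(-1)^r$ tied to $P$ and $(-2)^r$ tied to $J$). For emphasis one could instead begin one level higher with Theorem~\ref{thm.rtf6ne1}, observing that any Lucas sequence of the first kind is the Horadam sequence obtained from $a=0$, $b=1$, so that the $X_0$ and $Y_0$ terms in Theorem~\ref{thm.rtf6ne1} drop out and one recovers Theorem~\ref{cor.4tzg84a} as a special case before specializing further. Either route amounts to the same routine verification, and specialization from Theorem~\ref{cor.4tzg84a} is the most economical.
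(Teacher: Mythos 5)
Your proposal is correct and is precisely the paper's intended route: the theorem sits in the subsection immediately following Theorem~\ref{cor.4tzg84a} and is obtained by the substitution ${U_X}_m=P_m$, ${V_X}_m=Q_m$, $q_X=-1$, ${U_Y}_m=J_m$, ${V_Y}_m=j_m$, $q_Y=-2$ that you describe. Note only that a literal transcription gives $\left((-1)^r - Q_r j_r + j_{2r}\right)J_r$ inside the $P_{r(n+1)}$ bracket, whereas the printed statement has $J_{2r}$ there --- a typo in the paper (a numerical check at $r=2$, $n=1$ confirms $J_r$ is the correct index), so your derivation in fact produces the corrected formula.
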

\begin{example}
	\begin{align}
		2\sum_{k=0}^n P_k J_{n-k} &= 2J_n - J_{n+1} - P_n - P_{n+1}, \\
	   	28\sum_{k=0}^n P_{2k} J_{2n-2k} &= P_{2n} + 51P_{2n+2} - 6J_{2n+2} - 8J_{2n}, \\
	   	686\sum_{k=0}^n P_{3k} J_{3n-3k} &= 21P_{3n} - 591P_{3n+3} - 35J_{3n+3} - 280J_{3n}.		
	\end{align}
\end{example}

\subsection{Convolution of two Lucas sequences of the second kind}

\begin{theorem}
For $j\in \mathbb{Z}$, let $V_{X_j}(p_X,q_X)$ and $V_{Y_j}(p_Y,q_Y)$ be two Lucas sequences of the second kind. Then, 
\begin{equation}
\begin{split}
\gamma(r) \sum_{k = 0}^n V_{X_{rk}} V_{Y_{r(n - k)}} &= q_X^r V_{X_{rn}} \left( 2(q_X^r - {V_X}_r {V_Y}_r + {V_Y}_{2r} ) 
+ {V_X}_r V_{Y_{r}} - V_{Y_{2r}} \right) \\
		&\quad - V_{X_{r(n + 1)}} \left( {(q_X^r - {V_X}_r  {V_Y}_r + {V_Y}_{2r}  )V_{Y_{r}} + {V_X}_r V_{Y_{2r}} - V_{Y_{3r}} } \right)\\
		&\quad\, + q_Y^r V_{Y_{rn}} \left( {2(q_Y^r - {V_Y}_r {V_X}_r + {V_X}_{2r} ) + {V_Y}_r V_{X_{r}} - V_{X_{2r}} } \right)\\
		&\quad\; - V_{Y_{r(n + 1)}} \left( {(q_Y^r - {V_Y}_r {V_X}_r + {V_X}_{2r} )V_{X_{r}} + {V_Y}_r V_{X_{2r}} - V_{X_{3r}} } \right),
\end{split}
\end{equation}
where
\begin{equation*}
\gamma (r) = q_X^{2r} + q_Y^{2r} - (q_X^r + q_Y^r ){V_X}_r {V_Y}_r + q_X^r {V_Y}_{2r} + q_Y^r {V_X}_r^2. 
\end{equation*}
\end{theorem}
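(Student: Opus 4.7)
The plan is to specialize Theorem~\ref{thm.rtf6ne1} to the case where both Horadam sequences are themselves Lucas sequences of the second kind. Recall that $V_m(p,q) = w_m(2,p;p,q)$, so each Lucas sequence of the second kind is a Horadam sequence with initial values $V_0 = 2$ and $V_1 = p$, and its associated Lucas sequence of the second kind in the sense of Theorem~\ref{thm.rtf6ne1} coincides with itself. Consequently, the function $\gamma(r)$ in the claim is literally the $\gamma(r)$ of Theorem~\ref{thm.rtf6ne1}, with no change needed.

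Concretely, I would set $X_m = V_{X_m}$ and $Y_m = V_{Y_m}$ in Theorem~\ref{thm.rtf6ne1}. Because the associated second-kind sequences appearing in that theorem are already ${V_X}$ and ${V_Y}$, every factor of the form ${V_X}_{jr}$ or ${V_Y}_{jr}$ on the right-hand side stays unchanged. The only genuinely new input is that $X_0 = Y_0 = 2$. With this, the factors $(q_X^r - {V_X}_r {V_Y}_r + {V_Y}_{2r}) Y_0$ and $(q_Y^r - {V_Y}_r {V_X}_r + {V_X}_{2r}) X_0$ collapse to $2(q_X^r - {V_X}_r {V_Y}_r + {V_Y}_{2r})$ and $2(q_Y^r - {V_Y}_r {V_X}_r + {V_X}_{2r})$, which are exactly the leading parts of the coefficients of $q_X^r V_{X_{rn}}$ and $q_Y^r V_{Y_{rn}}$ in the statement. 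The remaining substitutions $Y_r \to V_{Y_r}$, $Y_{2r} \to V_{Y_{2r}}$, $Y_{3r} \to V_{Y_{3r}}$, together with their $X$-analogues, then line up the four bracketed expressions with those in the claim, term by term.

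I do not expect any real obstacle, since the argument reduces to a renaming of symbols plus the single numerical substitution $X_0 = Y_0 = 2$. The only point deserving a little care is the bookkeeping that ensures the factor of $2$ appears only in the coefficients multiplying $q_X^r V_{X_{rn}}$ and $q_Y^r V_{Y_{rn}}$, and \emph{not} in those multiplying $V_{X_{r(n+1)}}$ and $V_{Y_{r(n+1)}}$, where in Theorem~\ref{thm.rtf6ne1} it is $Y_r$ and $X_r$ rather than $Y_0$ and $X_0$ that multiply the parenthesized $q$-expression. Once this placement is checked, the identity follows directly from Theorem~\ref{thm.rtf6ne1}, so the proof is effectively one line: apply Theorem~\ref{thm.rtf6ne1} with $X_m = V_{X_m}$, $Y_m = V_{Y_m}$, and use $V_{X_0} = V_{Y_0} = 2$.
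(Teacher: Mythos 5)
Your proposal is correct and matches the paper's (implicit) route: the result is exactly the specialization of Theorem~\ref{thm.rtf6ne1} to $X_m=V_{X_m}$, $Y_m=V_{Y_m}$ with $X_0=Y_0=2$, which is why the paper states it without a separate proof, just as it does for the first-kind analogue in Theorem~\ref{cor.4tzg84a}. The substitutions you list, including the placement of the factor $2$ only in the $q_X^r V_{X_{rn}}$ and $q_Y^r V_{Y_{rn}}$ coefficients, check out term by term.
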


\begin{theorem}
For integers $r$ and $n$,
\begin{equation}
\begin{split}
\gamma(r) \sum_{k = 0}^n L_{rk} j_{r(n - k)} &= (-1)^r L_{rn}(2((-1)^r-L_rj_r+j_{2r})+L_r j_r-j_{2r}) \\
&\qquad - L_{r(n+1)}(((-1)^r - L_r j_r + j_{2r})j_{r} + L_r j_{2r} - j_{3r}) \\
&\qquad +(-2)^r j_{rn}(2((-2)^r-j_rL_r+L_{2r})j_r L_r - L_{2r}) \\
&\qquad - j_{r(n+1)}(((-2)^r - j_r L_r + L_{2r})L_r + j_r L_{2r} - L_{3r}),
\end{split}
\end{equation}
where 
\begin{equation*}
\gamma(r) = 1 + 4^r -((-1)^r+(-2)^r) L_r j_r + (-1)^r j_{2r} + (-2)^r L_r^2.
\end{equation*}
\end{theorem}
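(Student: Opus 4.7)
The plan is to reduce the identity to the preceding general theorem on convolutions of two Lucas sequences of the second kind by an appropriate specialization. The key observation is that both sequences in question are themselves Lucas sequences of the second kind: we have $L_m = V_m(1,-1)$ (so $q_X = -1$) and $j_m = V_m(1,-2)$ (so $q_Y = -2$). In particular, the ``summed'' sequence $V_{X_m}$ and the ``associated'' sequence ${V_X}_m$ appearing in the preceding theorem coalesce to a single object, namely $L_m$; likewise on the $Y$-side both collapse to $j_m$. This is exactly parallel to how Theorem~\ref{thm.lwh48n4} was derived from Theorem~\ref{thm.rtf6ne1}.

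Concretely, I would first verify the claimed $\gamma(r)$. Substituting $q_X=-1$, $q_Y=-2$, ${V_X}_m = L_m$, ${V_Y}_m = j_m$ into
\[
\gamma(r) = q_X^{2r} + q_Y^{2r} - (q_X^r + q_Y^r){V_X}_r{V_Y}_r + q_X^r{V_Y}_{2r} + q_Y^r{V_X}_r^2
\]
gives $q_X^{2r}=1$, $q_Y^{2r}=4^r$, and $q_X^r+q_Y^r=(-1)^r+(-2)^r$, yielding
\[
\gamma(r)=1+4^r-((-1)^r+(-2)^r)L_r j_r+(-1)^r j_{2r}+(-2)^r L_r^2,
\]
which matches the statement. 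Next I would feed the same substitutions into the four bracketed summands on the right-hand side of the general theorem. Because of the coalescences noted above, every product of the form ${V_X}_r V_{Y_r}$, ${V_X}_r V_{Y_{2r}}$, $V_{Y_{3r}}$, etc.\ becomes simply $L_r j_r$, $L_r j_{2r}$, $j_{3r}$, etc., and the four groups of coefficients assemble directly into the displayed formula for $\sum_{k=0}^n L_{rk} j_{r(n-k)}$.

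The only real obstacle is bookkeeping: the general theorem carries two slightly different $V$-type symbols per sequence (the summed one and the associated one), and I would need to make sure the coalescence $V_{X_m}={V_X}_m$ is applied consistently throughout each of the four brackets. Particular care is needed in the coefficients multiplying $V_{X_{rn}}$ and $V_{Y_{rn}}$, which carry the extra additive factor $2(q_X^r - {V_X}_r{V_Y}_r + {V_Y}_{2r})$ (respectively its $Y$-counterpart) that does not appear in the $V_{X_{r(n+1)}}$ and $V_{Y_{r(n+1)}}$ coefficients. Beyond this verification, no new ideas are required: the proof reduces to a single specialization of the preceding theorem followed by matching of terms.
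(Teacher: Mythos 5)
Your proposal is correct and takes exactly the route the paper intends: the theorem is the specialization $V_{X_m}={V_X}_m=L_m$, $V_{Y_m}={V_Y}_m=j_m$, $q_X=-1$, $q_Y=-2$ of the immediately preceding theorem on convolutions of two Lucas sequences of the second kind, in direct parallel to how Theorem~\ref{thm.lwh48n4} follows from Theorem~\ref{thm.rtf6ne1}. Your careful substitution also exposes a typo in the printed statement: the coefficient of $(-2)^r j_{rn}$ should read $2((-2)^r-j_rL_r+L_{2r})+j_rL_r-L_{2r}$, with a plus sign before $j_rL_r$ rather than the juxtaposition shown.
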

\begin{example}
	\begin{align}
		\sum_{k=0}^n L_k j_{n-k} &= 4j_n +j_{n+1} - 2L_n - L_{n+1}, \\
		5\sum_{k=0}^n L_{2k} j_{2n-2k} &=5j_{2n+2}+L_{2n+2}-4L_{2n}, \\
		140\sum_{k=0}^n L_{3k} j_{3n-3k} &= 35L_{3n} + L_{3n+3} +42j_{3n+3} - 176j_{3n}.		
	\end{align}
\end{example}

\section{More convolutions from a simple relation}

In this section we state a slight generalization of an identity of Carlitz \cite{Carlitz}.
The idea can also be applied to Chebyshev polynomials. 

\begin{theorem}\label{thm_carlitz}
Let $U_n, V_n, t_n(x)$, and $u_n(x)$ be the Lucas sequences and Chebyshev polynomials of the first and the second kinds, respectively.
Let further $T(n,k),0\leq k\leq n$ be a sequence such that $T(n,k)=T(n,n-k)$. Then we have the convolution identities
\begin{equation}
\sum_{k=0}^n T(n,k) U_{rk} V_{r(n-k)} = U_{rn} \sum_{k=0}^n T(n,k)
\end{equation}
and
\begin{equation}
\sum_{k=0}^n T(n,k) u_{rk-1}(x) t_{r(n-k)}(x) = \frac{u_{rn-1}(x)}{2} \sum_{k=0}^n T(n,k).
\end{equation}
\end{theorem}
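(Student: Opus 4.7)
The plan is to prove both identities by Binet-formula expansion combined with an antisymmetry-cancellation argument that exploits $T(n,k) = T(n,n-k)$. First, for the Lucas-sequence identity, I would use $U_m = (\alpha^m - \beta^m)/(\alpha - \beta)$ and $V_m = \alpha^m + \beta^m$ from the introduction and multiply out the product, obtaining
\[
U_{rk} V_{r(n-k)} = \frac{\alpha^{rn} - \beta^{rn}}{\alpha - \beta} + \frac{\alpha^{rk}\beta^{r(n-k)} - \alpha^{r(n-k)}\beta^{rk}}{\alpha - \beta} = U_{rn} + R(k),
\]
where the residual $R(k)$ depends on $k$ but satisfies $R(n-k) = -R(k)$. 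Multiplying by $T(n,k)$ and summing over $k$ from $0$ to $n$, the constant part contributes exactly $U_{rn}\sum_{k=0}^n T(n,k)$, matching the target right-hand side.

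The remaining task is to show that $\sum_{k=0}^n T(n,k) R(k) = 0$. This follows immediately from the symmetry hypothesis: the substitution $k \mapsto n-k$ in the sum, together with $T(n,k) = T(n,n-k)$ and $R(n-k) = -R(k)$, gives $\sum_{k=0}^n T(n,k) R(k) = -\sum_{k=0}^n T(n,k) R(k)$, so the sum is zero. This establishes the first identity.

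For the Chebyshev identity the template is identical. Using the Binet-like formulas $t_m(x) = (\lambda^m + \gamma^m)/2$ and $u_{m-1}(x) = (\lambda^m - \gamma^m)/(\lambda - \gamma)$ with $\lambda, \gamma = x \pm \sqrt{x^2 - 1}$ (note $u_{-1}(x) = 0$, which keeps the $k=0$ boundary sensible), the same multiplication yields
\[
u_{rk-1}(x)\, t_{r(n-k)}(x) = \frac{u_{rn-1}(x)}{2} + \widetilde R(k),
\]
with $\widetilde R(n-k) = -\widetilde R(k)$, and the antisymmetry argument closes the proof. There is essentially no obstacle here: the derivation is a one-line Binet expansion followed by an index flip. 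The only minor point to guard against is the degenerate case $\alpha = \beta$ (respectively $x = \pm 1$ for Chebyshev), where the Binet formulas require a limiting interpretation; since both sides are polynomial in $p, q$ (respectively $x$), continuity in the parameters, or a direct one-line check, handles these boundary cases.
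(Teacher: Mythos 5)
Your proposal is correct and follows essentially the same route as the paper: expand via the Binet forms, observe that the product equals $U_{rn}$ (resp.\ $u_{rn-1}(x)/2$) plus a term that is antisymmetric under $k\mapsto n-k$, and kill the residual sum by the index flip combined with $T(n,k)=T(n,n-k)$. Your treatment of the Chebyshev case and the remark on degenerate parameters are slightly more explicit than the paper, which simply declares that variant ``very similar and omitted,'' but the underlying argument is identical.
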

\begin{proof}
Using the Binet forms we get
\begin{equation*}
\sum_{k=0}^n T(n,k) U_{rk} V_{r(n-k)} = U_{rn} \sum_{k=0}^n T(n,k) 
+ \frac{1}{\alpha - \beta} \sum_{k=0}^n T(n,k) \Big ( \alpha^{rk}\beta^{r(n-k)} - \alpha^{r(n-k)}\beta^{rk} \Big ).
\end{equation*}
Now, 
\begin{equation*}
\sum_{k=0}^n T(n,k) \alpha^{rk}\beta^{r(n-k)} = \sum_{k=0}^n T(n,n-k)\alpha^{r(n-k)}\beta^{rk} = \sum_{k=0}^n T(n,k)\alpha^{r(n-k)}\beta^{rk},
\end{equation*}
and therefore
\begin{equation*}
\sum_{k=0}^n T(n,k) \Big ( \alpha^{rk}\beta^{r(n-k)} - \alpha^{r(n-k)}\beta^{rk} \Big ) = 0.
\end{equation*}
The proof of the Chebyshev variant is very similar and omitted.
\end{proof}

\begin{example}
Let $T(n,k)=1$. Then we recover \eqref{UV_conv} and get for the Chebyshev polynomials
\begin{equation}
\sum_{k=0}^n u_{rk-1}(x) t_{r(n-k)}(x) = \frac{n+1}{2} u_{rn-1}(x).
\end{equation}
\end{example}

\begin{example}
Let $T(n,k)=k(n-k)$. Then we get
\begin{equation}
\sum_{k=0}^n k(n-k) U_{rk} V_{r(n-k)} = \frac{(n-1)n(n+1)}{6} U_{rn},
\end{equation}
and
\begin{equation}
\sum_{k=0}^n k(n-k) u_{rk-1}(x) t_{r(n-k)}(x) = \frac{(n-1)n(n+1)}{12} u_{rn-1}(x),
\end{equation}
where we used the simple evaluation
\begin{equation*}
\sum_{k=0}^n k(n-k) = \frac{(n-1)n(n+1)}{6}.
\end{equation*}
\end{example}

\begin{example}
Let $T(n,k)=k^2(n-k)^2$. Then we get
\begin{equation}
\sum_{k=0}^n k^2(n-k)^2 U_{rk} V_{r(n-k)} = \frac{n(n^4-1)}{30} U_{rn},
\end{equation}
and
\begin{equation}
\sum_{k=0}^n k^2(n-k)^2 u_{rk-1}(x) t_{r(n-k)}(x) = \frac{n(n^4-1)}{60} u_{rn-1}(x),
\end{equation}
where we used the simple evaluation
\begin{equation*}
\sum_{k=0}^n k^2(n-k)^2 = \frac{n(n^4-1)}{30}.
\end{equation*}
\end{example}

\begin{example}
Let $T(n,k)=U_{rk} U_{r(n-k)}, r\geq 1$. Then we get
\begin{equation}
\sum_{k=0}^n U_{rk}^2 U_{r(n-k)} V_{r(n-k)} = \frac{U_{rn}}{\Delta U_r}\Big ((n + 1)U_r V_{rn} - 2U_{r(n + 1)}\Big ) ,
\end{equation}
where \eqref{UU_conf} was used. Similarly, with $T(n,k)=V_{rk} V_{r(n-k)}, r\geq 1$, and using \eqref{VV_conf},
\begin{equation}
\sum_{k=0}^n U_{rk} V_{rk} V_{r(n-k)}^2 = \frac{U_{rn}}{U_r}\Big ((n + 1)U_r V_{rn} + 2U_{r(n + 1)}\Big ).
\end{equation}
\end{example}

\begin{example}
Let $T(n,k)=t_{rk}(x) t_{r(n-k)}(x)$. Then we get
\begin{equation}
\sum_{k=0}^n u_{rk-1}(x) t_{rk}(x) t_{r(n-k)}^2(x) = 
\frac{u_{rn-1}(x)}{4 u_{r-1}(x)}\Big ((n + 1)u_{r-1}(x) t_{rn}(x) + u_{r(n + 1)-1}(x)\Big ) ,
\end{equation}
where \eqref{eq.qa5lyvu} was used. Similarly, with $T(n,k)=u_{rk}(x) u_{r(n-k)}(x)$, and using \eqref{eq.h4s17al},
\begin{align}
& \sum_{k=0}^n u_{rk-1}(x) u_{rk}(x) u_{r(n-k)}(x) t_{r(n-k)}(x)  \nonumber \\
&\qquad\qquad = \frac{u_{rn-1}(x)}{4 (x^2-1)u_{r-1}(x)}\Big ((n + 1)u_{r-1}(x) t_{rn+2}(x) - u_{r(n + 1)-1}(x) \Big ).
\end{align}
\end{example}

We continue with some examples involving binomial coefficients, which have not been the subject of this paper
but give an idea of the broad applicability of Theorem \ref{thm_carlitz}. 

\begin{example}
Let $T(n,k)=\binom{n}{k}$. Then we get
\begin{equation}
\sum_{k=0}^n \binom{n}{k} U_{rk} V_{r(n-k)} = 2^n U_{rn},
\end{equation}
and
\begin{equation}
\sum_{k=0}^n \binom{n}{k} u_{rk-1}(x) t_{r(n-k)}(x) = 2^{n-1} u_{rn-1}(x).
\end{equation}
\end{example}

\begin{example}
Let $T(n,k)= \binom{n}{k}^2$. Then we get
\begin{equation}
\sum_{k=0}^n \binom{n}{k}^2 U_{rk} V_{r(n-k)} = \binom{2n}{n} U_{rn},
\end{equation}
and
\begin{equation}
\sum_{k=0}^n \binom{n}{k}^2 u_{rk-1}(x) t_{r(n-k)}(x) = \binom{2n}{n} \frac{u_{rn-1}(x)}{2},
\end{equation}
where we used the evaluation
\begin{equation*}
\sum_{k=0}^n \binom{n}{k}^2 = \binom{2n}{n}.
\end{equation*}
\end{example}

\begin{example}
Let $T(n,k)= \binom{2n}{2k}$. Then we get
\begin{equation}
\sum_{k=0}^n \binom{2n}{2k} U_{rk} V_{r(n-k)} = 2^{2n-1} U_{rn},
\end{equation}
and
\begin{equation}
\sum_{k=0}^n \binom{2n}{2k} u_{rk-1}(x) t_{r(n-k)}(x) = 4^{n-1} u_{rn-1}(x),
\end{equation}
where we used the evaluation
\begin{equation*}
\sum_{k=0}^n \binom{2n}{2k} = 2^{2n-1}.
\end{equation*}
\end{example}

\begin{example}
Let $T(n,k)= \binom{3n}{3k}$. Then we get
\begin{equation}
\sum_{k=0}^n \binom{3n}{3k} U_{rk} V_{r(n-k)} = \frac{2}{3} U_{rn} \Big ( 2^{3n-1}+(-1)^n\Big ),
\end{equation}
and
\begin{equation}
\sum_{k=0}^n \binom{3n}{3k} u_{rk-1}(x) t_{r(n-k)}(x) = \frac{1}{3} u_{rn-1}(x) \Big ( 2^{3n-1}+(-1)^n\Big ),
\end{equation}
where we used the evaluation
\begin{equation*}
\sum_{k=0}^n \binom{3n}{3k} = \frac{2}{3} \Big ( 2^{3n-1}+(-1)^n\Big ).
\end{equation*}
\end{example}

\begin{example}
Let $T(n,k)= \binom{n}{k}B_k(x)B_{n-k}(x)$, where $B_n(x)$ are the Bernoulli polynomials. Then we get
\begin{equation}
\sum_{k=0}^n \binom{n}{k} B_k(x)B_{n-k}(x) U_{rk} V_{r(n-k)} = U_{rn} \Big ( n(2x-1)B_{n-1}(2x) - (n-1)B_n(2x)\Big ),
\end{equation}
and
\begin{equation}
\sum_{k=0}^n \binom{n}{k} B_k(x)B_{n-k}(x) u_{rk-1}(x) t_{r(n-k)}(x) = \frac{u_{rn-1}(x)}{2}\Big ( n(2x-1)B_{n-1}(2x) - (n-1)B_n(2x)\Big ),
\end{equation}
where we used the polynomial identity \cite{Agoh}
\begin{equation*}
\sum_{k=0}^n \binom{n}{k} B_k(x)B_{n-k}(x) = n(2x-1)B_{n-1}(2x) - (n-1)B_n(2x).
\end{equation*}
See also \cite{frontczak2020} for a special case of the first convolution.
\end{example}

Many more examples could be stated here involving $\binom{n}{k} U_{mk} U_{m(n-k)}, \binom{n}{k} V_{mk}V_{m(n-k)}$, 
Catalan numbers, and other sequences. 

\section{Concluding remarks}

This paper offers a direct approach to convolutions of second order sequences,
without any use of generating functions. It allows in a straightforward manner to generalize
some fixed convolutions involving these sequences. Though not stated explicitly in the main text,
the approach is also applicable to derive convolutions for Fibonacci (Lucas) polynomials,
Pell (Pell-Lucas) polynomials and other polynomial families.

\end{document}